\newtheorem{Thm}{Theorem}[section]
\newtheorem{Prop}[Thm]{Proposition}
\theoremstyle{remark}
\newtheorem{Rem}[Thm]{Remark}
\theoremstyle{definition}
\title[A note on the Koszul complex in deformation quantization]{A note on the Koszul complex in deformation quantization}
\author{A.~Ferrario}
\author{C.~A.~Rossi}
\author{T.~Willwacher}
\address{A.~F. and T.~W.: Department of mathematics, ETH Zurich,
8092 Zurich, Switzerland}
\address{C.~A.~R.: Centro de An\'{a}lise Matem\'{a}tica, Geometria e Sistemas Din\^{a}micos,
Departamento de Matem\'atica, Instituto Superior T\'ecnico, Av.
Rovisco Pais, 1049-001 Lisboa, Portugal}
\begin{document}

%\texttt{\tableofcontents}

\keywords{Koszul duality; $A_\infty$-algebras and -bimodules; Deformation Quantization}
\subjclass[2000]{16E45,81R60}

\begin{abstract}
The aim of this short note is to present a proof of the existence of an $A_\infty$-quasi-isomorphism between the $A_\infty$-$\mathrm S(V^*)$-$\wedge(V)$-bimodule $K$, introduced in~\cite{CFFR}, and the Koszul complex $\mathrm K(V)$ of $\mathrm S(V^*)$, viewed as an $A_\infty$-$\mathrm S(V^*)$-$\wedge(V)$-bimodule, for $V$ a finite-dimensional (complex or real) vector space.
\end{abstract}

\maketitle

\section{Introduction}\label{s-intro}
The main result of~\cite{CFFR} is a Formality Theorem in presence of two subspaces $U_1$, $U_2$ of a real or complex finite-dimensional vector space $V$, constructed using the graphical techniques of Kontsevich~\cite{K}.
Such a Formality Theorem has interesting by-products even in the simplest case, when $U_1=V$ and $U_2=\{0\}$:
\begin{enumerate}
\item It implies that the deformation quantization of $A=\mathcal O_V=\mathrm S(V^*)$~\cite{K} and of $B=\mathcal O_{V^*[1]}=\wedge(V)$~\cite{CF2} w.r.t.\ a given polynomial Poisson structure on $V$ preserves Koszul duality.
\item It yields a proof of a conjecture of B.~Shoikhet~\cites{Sh1,CFR} about the possibility of realizing Kontsevich's deformed algebra $A_\hbar=A[\![\hbar]\!]$~\cite{K} w.r.t.\ a polynomial Poisson structure {\em via} generators and relations.
\end{enumerate} 

In this particular context, the main object appearing in the Formality Theorem is $K=\mathbb K$ (here, $\mathbb K$ is any field of characteristic $0$ containing $\mathbb R$), endowed with a nontrivial $A_\infty$-$A$-$B$-bimodule structure. 
This bimodule structure is constructed explicitly in~\cite[Subsection 6.2]{CFFR}, using Kontsevich's graphical techniques, specializing earlier results of Cattaneo--Felder~\cites{CF,CF2}. 
The main result of this note is the following 
\begin{Thm}\label{t-koszul-bar}
The $A_\infty$-bimodule $K$ is $A_\infty$-quasi-isomorphic to the Koszul complex $\mathrm K(V)$ of $A$, endowed with the usual $A$-$B$-bimodule structure.
\end{Thm}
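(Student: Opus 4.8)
The plan is to reduce Theorem~\ref{t-koszul-bar} to a statement about $K$ alone. Write $A:=\mathrm S(V^*)$, $B:=\wedge(V)$, and let $\mathbb K$ denote the trivial $A$-$B$-bimodule, on which $A$ acts through its augmentation $\epsilon_A$ and $B$ through $\epsilon_B$. The Koszul complex $\mathrm K(V)$ is, by construction, a resolution of $\mathbb K$ as a left $A$-module, and its augmentation $\mathrm K(V)\to\mathbb K$ is compatible with the right $B$-action (which, on $\mathbb K$, is forced to be $\epsilon_B$), hence is a \emph{strict} quasi-isomorphism of dg $A$-$B$-bimodules. Since $A_\infty$-quasi-isomorphisms can be inverted up to homotopy, it is therefore enough to produce an $A_\infty$-quasi-isomorphism between $K$ and $\mathbb K$; and as $K$ is concentrated in degree $0$ with vanishing differential, this amounts to showing that the higher structure maps of $K$ can be trivialised.

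So I would first unravel the $A_\infty$-bimodule $K$ of \cite[Subsection~6.2]{CFFR} in the case $U_1=V$, $U_2=\{0\}$. By degree counting, and by inspection of the defining graphs, the left and right actions of $K$ at lowest order are $\epsilon_A$ and $\epsilon_B$, while the higher structure maps $m^K_{k,l}\colon A^{\otimes k}\otimes\mathbb K\otimes B^{\otimes l}\to\mathbb K$ can be non-zero only when $k,l\ge 1$ and the $l$ inputs from $\wedge(V)$ carry total exterior degree exactly $k+l-1$; on such inputs $m^K_{k,l}$ is a Kontsevich-type weight, namely an integral of a product of propagator $1$-forms over a compactified configuration space of points on the boundary of the disc.

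The heart of the argument is the evaluation of these weights, using the standard vanishing lemmas \cite{K}: that the propagator restricts to a trivial (indeed identically zero) form along the boundary strata over which one integrates, Kontsevich's lemma killing graphs that have a vertex joined to another by two parallel edges, the reflection symmetry of configuration spaces of purely boundary points, and Stokes' theorem (which converts the defining relations of $K$, and later the $A_\infty$-morphism relations, into combinatorial identities among codimension-one strata). The outcome should be that all $m^K_{k,l}$ with $k+l\ge 2$ vanish, so that $K$ is literally the trivial bimodule $\mathbb K$ and Theorem~\ref{t-koszul-bar} follows from the first paragraph; if instead some survive, they are explicit iterated copies of the canonical pairing $V^*\otimes V\to\mathbb K$, and one identifies $K$ with the $A_\infty$-bimodule obtained from $\mathrm K(V)$ by homotopy transfer along its augmentation (with respect to the Euler contracting homotopy of the Koszul complex), the desired quasi-isomorphism then being the one furnished by the homological perturbation lemma. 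Either way one can, alternatively, construct an $A_\infty$-quasi-isomorphism $\mathrm K(V)\to K$ directly by induction on $k+l$: since $\mathrm K(V)$ is a semifree resolution, the obstruction to extending at each stage is a cocycle in a $\mathrm{Hom}$-complex computing $\mathrm{Ext}^\bullet_{A\otimes B^{\mathrm{op}}}(\mathbb K,\mathbb K)$, and it is a coboundary because the lower-order components already match those of an honest quasi-isomorphism out of $\mathrm K(V)$.

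The step I expect to be the main obstacle is precisely this evaluation of the configuration-space integrals, together with the attendant bookkeeping: one must reconcile the orientation conventions, the Koszul signs attached to the exterior inputs, and the $A_\infty$-sign conventions of \cite{CFFR} carefully enough to be sure that no exotic transcendental weight survives, and that the maps obtained genuinely satisfy the $A_\infty$-relations. A secondary, organisational difficulty is to keep the passage between the equivalent descriptions of $A_\infty$-bimodule morphisms (comodule maps over $\mathrm B A\otimes\mathrm B B$, families of component maps, twisting cochains) consistent with these signs throughout.
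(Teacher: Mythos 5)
Your opening reduction is where the argument breaks. The augmentation $\mathrm K(V)\to\mathbb K$ is left $A$-linear and a quasi-isomorphism of complexes, but it is \emph{not} compatible with the right $B$-action: with the notation of the paper, $\theta_1\cdot\partial_{\theta_1}=\pm 1$ in $\mathrm K(V)$, which the augmentation sends to $\pm 1$, whereas $\epsilon(\theta_1)\cdot\epsilon_B(\partial_{\theta_1})=0$. So there is no strict quasi-isomorphism of dg $A$-$B$-bimodules from $\mathrm K(V)$ to the trivial bimodule $\mathbb K$, and in fact $\mathrm K(V)$ is not quasi-isomorphic to the trivial bimodule at all: its cohomology carries the augmentation actions, but the bimodule is not formal, the non-formality being detected already by the ``Massey-type'' pairing $\mathrm m^{1,1}$. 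Correspondingly, your first proposed outcome --- that all $\mathrm m_K^{k,l}$ with $k+l\geq 2$ vanish --- cannot occur: the paper records explicitly that $\mathrm m_K^{1,1}$ is the duality pairing between $V^*$ and $V$, and this component is an invariant of the quasi-isomorphism class (since the differential on $K=\mathbb K$ vanishes). Had your reduction been valid, it would prove a statement contradicting the theorem. The two fallback branches you offer are the ones that could work, but they are exactly where the content lies and are not carried out: identifying the graphically defined operations of $K$ with those obtained from $\mathrm K(V)$ by homotopy transfer is essentially the whole theorem, and the obstruction-theoretic construction of a morphism out of the semifree resolution is precisely the ``general homological algebra'' argument of Remark~\ref{r-proof}, which the authors state they do not know how to make rigorous (note that $\mathrm{Ext}^\bullet_{A\otimes B^{\mathrm{op}}}(\mathbb K,\mathbb K)$ is far from concentrated in one degree, so the obstructions do not vanish for free).

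For comparison, the paper avoids mapping to $\mathbb K$ with its trivial structure and instead factors the comparison as $\mathrm K(V)\hookrightarrow A\underline\otimes_A K\to K$. The right-hand map is the bar-type quasi-isomorphism of Proposition~\ref{p-bar}, valid because $A$, $B$ are flat and $A$, $K$ are unital. The left-hand map is the antisymmetrization $\Phi$ of~\eqref{eq-skew}, declared to be the $(0,0)$ Taylor component with all higher components set to zero; the substance of the proof is then (i) a vanishing lemma showing $\mathrm d_{A\underline\otimes_A K}^{0,n}(\Phi(\eta)|b_1|\cdots|b_n)=0$ for $n\geq 2$, obtained by a valence/degree count on the admissible graphs defining $\mathrm d_K^{q-l,n}$, and (ii) an explicit evaluation of the single contributing graph for $n=1$, giving the weight $(-1)^p/p!$ that matches the contraction action on $\mathrm K(V)$. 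Your instinct to use Kontsevich-type vanishing lemmas is pointed at the right place (step (i)), but it must be deployed inside a correct global architecture such as this one, not in the service of trivialising $K$.
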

Theorem~\ref{t-koszul-bar} has been previously stated as a conjecture, see~\cite[Conjecture 1.3]{CFFR}, where the $A_\infty$-$A$-$B$-bimodule has been introduced.

In fact, the Koszul complex, which is a dg $A$-$B$-bimodule, has been considered in the framework of Deformation Quantization, in~\cite{Sh2}, where the author used Tamarkin's Formality in order to quantize both algebras $A$ and $B$ and the Koszul complex itself, in order to prove that Koszul duality is preserved by Deformation Quantization. 

The approach of~\cite{CFFR} makes use of Kontsevich's Formality: the $A_\infty$-$A$-$B$-bimodule structure originates from perturbative expansion of the Poisson Sigma model, and it turns out that it realizes, in the $A_infty$-framework, Koszul duality between $A$ and $B$: it turns out that it behaves well w.r.t.\ Deformation Quantization, hence it is the right candidate for showing that Kontsevich's Deformation Quantization techniques preserve also Koszul duality.

%The conjecture is thus natural, as we have two distinct objects which realize the same duality property between $A$ and $B$.

For the proof we check that both morphisms in the sequence 
\[
\mathrm K(V) \hookrightarrow A\underline\otimes_A K \to K
\]
are quasi-isomorphisms of $A_\infty$-bimodules. Here $(-\underline\otimes_A-)$ denotes a tensor product of $A_\infty$-bimodules, defined in Section~\ref{s-1}. As for ordinary bimodules, one verifies that the tensor product with the algebra itself, i.e., $(A\underline\otimes_A-)$, is (quasi-isomorphic to) the identity, and hence the right arrow is a quasi-isomorphism. The fact that the left arrow is a quasi-isomorphism of $A_\infty$-bimodules is due to a peculiarity of the bimodule structure on $K$, and is proven in Section~\ref{s-3}.
\begin{Rem}\label{r-proof}
It has been pointed out to us by B.~Keller and A.~Khoroshkin independently that Theorem~\ref{t-koszul-bar} should follow from the results of~\cite{CFFR} and general arguments of homological algebra: however, we do not know, for the time being, a rigorous proof in this framework.
\end{Rem}

\subsection*{Acknowledgments} We kindly thank G.~Felder and D.~Calaque for many inspiring discussions, suggestions and constructive criticism.
We also thank B.~Keller and A.~Khoroshkin for the comments in Remark~\ref{r-proof}, although this paper may not answer to their criticism in the way they intended it.
Finally, we thank J.~Stasheff and the referee for having carefully read this paper and for many constructive critics and editorial improvements.
The second author acknowledges the stimulating working atmosphere of the FIM of the ETH, Z\"urich, where this work has been accomplished.

\section{Notation and conventions}\label{s-0}
Throughout the paper, $\mathbb K$ is a field of characteristic $0$, which contains $\mathbb R$. $V$ is a finite-dimensional vector space over $\mathbb K$, $V^*$ its dual.
Further, we denote by $\{x_i\}$, $i=1,\dots,d=\dim V$, a basis of $V^*$, which yields automatically global linear coordinates on $V$, which we denote by ${y_i}$.

Let $\texttt{Mod}_\mathbb K$ be the monoidal category of graded vector spaces, with graded tensor product, and with inner spaces of morphisms (i.e.\ we consider morphisms, which are finite sums of morphisms of any degree); $[\bullet]$ denotes the degree-shifting functor on $\texttt{Mod}_\mathbb K$.
In particular, the identity morphism of an object $M$ of $\texttt{Mod}_\mathbb K$ induces a canonical isomorphism $s:M\to M[1]$ of degree $-1$ with inverse $s^{-1}:M[1]\to M$ (suspension and de-suspension isomorphisms): for the sake of simplicity, we will use the following short-hand notation
\[
(v_1|\cdots|v_n)=s(v_1)\otimes\cdots \otimes s(v_n).
\]
The degree of an element $m$ of a homogeneous component of an object $M$ of $\texttt{Mod}_\mathbb K$ is denoted by $|m|$.
Unadorned tensor products are meant to be over $\mathbb K$.

A (possibly curved) $A_\infty$-algebra $A$ over $\mathbb K$ is equivalent to the structure of a codifferential cofree coalgebra with counit on $\mathrm T(A[1])=\bigoplus_{n\geq 0}A[1]^{\otimes n}$, for an object $A$ of $\texttt{Mod}_\mathbb K$. The codifferential $\mathrm d_A$ is uniquely determined by its Taylor components
\[
\mathrm d_A^n:A[1]^{\otimes n}\to A[1],\ n\geq 0,
\]
all of degree $1$, and the condition that $\mathrm d_A$ squares to $0$ translates into an infinite family of quadratic relations between its Taylor components.
We further set $\mathrm m_A^n=s^{-1}\circ \mathrm d_A^n \circ s^{\otimes n}$. By construction, $\mathrm m_A^n$ are $\mathbb K$-linear maps from $A^{\otimes n}$ to $A$ of degree $2-n$.
We refer to $\mathrm m_A^0$ as to the curvature of $A$. It is an element of $A$ of degree $2$, which measures the failure of $(A,\mathrm m_A^1)$ to be a differential graded (shortly, from now on, dg) vector space over $\mathbb K$.
If $\mathrm m_A^0=0$, then $A$ is said to be flat.

Finally, given two (possibly curved) $A_\infty$-algebras $A$, $B$, an $A_\infty$-$A$-$B$-bimodule structure on an object $K$ of $\texttt{Mod}_\mathbb K$ is equivalent to the structure of a codifferential cofree bicomodule on $\mathrm T(A[1])\otimes K[1]\otimes \mathrm T(B[1])$. As for $A_\infty$-algebras, such a codifferential $\mathrm d_K$ is uniquely determined by its Taylor components
\[
\mathrm d_K^{m,n}:A[1]^{\otimes m}\otimes K[1]\otimes B[1]^{\otimes n}\to K[1],\ m,n\geq 0,
\]
all of degree $1$.
As before, we introduce the maps $\mathrm m_K^{m,n}=s^{-1}\circ \mathrm d_K^{m,n}\circ s^{\otimes m+1+n}$, of degree $1-m-n$.
The condition that $\mathrm d_K$ squares to $0$ is equivalent to an infinite family of quadratic relations between the Taylor components of $\mathrm d_A$, $\mathrm d_B$ and $\mathrm d_K$.
For more details on $A_\infty$-bimodules over (possibly curved) $A_\infty$-algebras, we refer to~\cite[Sections 3 and 4]{CFFR}.
\begin{Rem}\label{r-bimod}
We observe that, if $A$ and $B$ are both flat, then an $A_\infty$-$A$-$B$-bimodule structure on $K$ yields both a left $A_\infty$-$A$- and right $A_\infty$-$B$-module structure on $K$ in the sense of~\cites{Kel,L-H}, but, if either $A$ or $B$ or both are curved, then the $A_\infty$-bimodule structure does not restrict to (left or right) $A_\infty$-module structures, see e.g.~\cite{W} and~\cite[Subsection 4.1]{CFFR}.
\end{Rem}

\section{The tensor product of $A_\infty$-bimodules}\label{s-1}
We consider now three (possibly curved) $A_\infty$-algebras $A$, $B$ and $C$. Furthermore, we consider an $A_\infty$-$A$-$B$-bimodule $K_1$ and an $A_\infty$-$B$-$C$-bimodule $K_2$.
The tensor product of $K_1$ and $K_2$ over $B$, as an element of $\texttt{Mod}_\mathbb K$, is defined as
\[
K_1\underline\otimes_B K_2=K_1\otimes \mathrm T(B[1])\otimes K_2.
\]
The $A_\infty$-structures on $A$, $B$, $C$, $K_1$ and $K_2$ determine a unique structure of $A_\infty$-$A$-$C$-bimodule over $K_1\underline\otimes_B K_2$, which we now describe explicitly.
By the arguments in Section~\ref{s-0}, it suffices to construct a codifferential on the cofree bicomodule
\[
\mathrm T(A[1])\otimes \left(K_1\underline\otimes_B K_2\right)[1]\otimes \mathrm T(C[1])\cong \mathrm T(A[1])\otimes K_1[1]\otimes \mathrm T(B[1])\otimes K_2[1]\otimes \mathrm T(C[1]),
\]
where the isomorphism is induced by suspension and de-suspension, and has degree $-1$; on the latter dg vector space, we define a bicoderivation {\em via}
\begin{equation}\label{eq-bicoder}
\mathrm d_{K_1}\otimes 1\otimes 1+1\otimes 1\otimes \mathrm d_{K_2}-1\otimes 1\otimes\mathrm d_B\otimes 1\otimes 1,
\end{equation}
where $1$ denotes here the identity operator on the corresponding factor.

\begin{Rem}\label{r-susp}
We now need a {\em caveat} regarding the fact that we use suspension and de-suspension in order to construct an $A_\infty$-structure on the tensor product of two $A_\infty$-bimodules, namely, while suspension or de-suspension of an 
$A_\infty$-(bi)module is again an $A_\infty$-(bi)module in a natural way, this is not true anymore for an $A_\infty$-algebra.
In fact, an $A_\infty$-algebra structure on $A$ cannot be transported on $A[-1]$, because of the fact that twisting w.r.t.\ suspension and de-suspension of the Taylor components of the $A_\infty$-structure on $A$ would produce maps of the wrong degree. 
\end{Rem}

It is more instructive to write down explicit formul\ae\ for the Taylor components of the previous bicoderivation
\begin{equation}\label{eq-tayl-tens}
\begin{aligned}
&\mathrm d_{K_1\underline\otimes_B K_2}^{m,n}\!(a_1|\cdots|a_m|k_1\otimes (b_1|\cdots|b_q)\otimes k_2|c_1|\cdots|c_n)=0,\quad m,n>0\\
&\mathrm d_{K_1\underline\otimes_B K_2}^{m,0}\!(a_1|\cdots|a_m|k_1\otimes (b_1|\cdots|b_q)\otimes k_2)=\sum_{l=0}^q s\left(s^{-1}(\mathrm d_{K_1}^{m,l}(a_1|\cdots|a_m|k_1|b_1|\cdots|b_l))\otimes (b_{l+1}|\cdots|b_q)\otimes k_2\right),\quad m>0\\
&\mathrm d_{K_1\underline\otimes_B K_2}^{0,n}\!(k_1\otimes (b_1|\cdots|b_q)\otimes k_2|c_1|\cdots|c_n)=(-1)^{|k_1|+\sum_{j=1}^q(|b_j|-1)}\sum_{l=0}^q s\left(k_1\otimes (b_1|\cdots|b_l)\otimes\right.\\
&\phantom{\mathrm d_{K_1\underline\otimes_B K_2}^{0,n}\!(k_1\otimes (b_1|\cdots|b_q)\otimes k_2|c_1|\cdots|c_n)=}\left.s^{-1}(\mathrm d_{K_2}^{q-l,n}(b_{l+1}|\cdots|b_q|k_2|c_1|\cdots|c_n)\right),\quad n>0,\\
&\mathrm d_{K_1\underline\otimes_B K_2}^{0,0}\!\left(s(k_1\otimes (b_1|\cdots|b_q)\otimes k_2)\right)=\sum_{l=0}^q s\left(s^{-1}(\mathrm d_{K_2}^{0,l}(k_1|b_1|\cdots|b_l)\otimes (b_{l+1}|\cdots|b_q)\otimes k_2\right)+\\
&\phantom{\mathrm d_{K_1\underline\otimes_B K_2}^{0,0}\!\left(s(k_1\otimes (b_1|\cdots|b_q)\otimes k_2)\right)=}+\sum_{0\leq l\leq q\atop 0\leq p\leq q-l} (-1)^{(|k_1|-1)+\sum_{j=1}^l (|b_j|-1)}s\!\left(k_1\otimes (b_1|\cdots|\mathrm d_B^p(b_{l+1}|\cdots|b_{l+p})|\cdots|b_q)\otimes k_2\right)+\\
&\phantom{\mathrm d_{K_1\underline\otimes_B K_2}^{0,0}\!\left(s(k_1\otimes (b_1|\cdots|b_q)\otimes k_2)\right)=}+(-1)^{|k_1|+\sum_{j=1}^q(|b_j|-1)}\sum_{l=0}^q
s\!\left(k_1\otimes (b_1|\cdots|b_l)\otimes s^{-1}(\mathrm d_{K_2}^{q-l,0}(b_{l+1}|\cdots|b_q|k_2)\right).
\end{aligned}
\end{equation}
\begin{Rem}\label{r-signs}
We observe that the signs in~\eqref{eq-tayl-tens} are dictated by Koszul's sign rule, together with the signs arising from the previous isomorphism $\mathrm T(A[1])\otimes \left(K_1\underline\otimes_B K_2\right)[1]\otimes \mathrm T(C[1])\cong \mathrm T(A[1])\otimes K_1[1]\otimes \mathrm T(B[1])\otimes K_2[1]\otimes \mathrm T(C[1])$ of said degree $-1$.
\end{Rem}
\begin{Prop}\label{p-tensor}
For $A_\infty$-algebras $A$, $B$, $C$ and $A_\infty$-bimodules $K_1$, $K_2$ as above, $(K_1\underline\otimes_B K_2,\mathrm d_{K_1\underline\otimes_B K_2})$, where the bicoderivation $\mathrm d_{K_1\underline\otimes_B K_2}$ is defined in~\eqref{eq-tayl-tens}, is an $A_\infty$-$A$-$C$-bimodule.
\end{Prop}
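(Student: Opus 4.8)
The plan is to reduce the whole statement to the single identity $\mathrm d_{K_1\underline\otimes_B K_2}^2=0$. Indeed, that $\mathrm d_{K_1\underline\otimes_B K_2}$ is a degree-$1$ bicoderivation of the cofree bicomodule $\mathrm T(A[1])\otimes(K_1\underline\otimes_B K_2)[1]\otimes\mathrm T(C[1])$ over $\mathrm T(A[1])$ and $\mathrm T(C[1])$ is immediate from the cofreeness: any family of degree-$1$ maps $A[1]^{\otimes m}\otimes(K_1\underline\otimes_B K_2)[1]\otimes C[1]^{\otimes n}\to(K_1\underline\otimes_B K_2)[1]$ lifts uniquely to such a bicoderivation, and~\eqref{eq-tayl-tens} just exhibits the Taylor components of ours.

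First I would rewrite the operator~\eqref{eq-bicoder} in a form adapted to squaring it. Recall that the codifferential of a cofree bicomodule $\mathrm T(A'[1])\otimes M[1]\otimes\mathrm T(B'[1])$, regarded as an operator, is the sum of three pieces: the coderivation $\mathrm d_{A'}$ acting on the left tensor factor, the lift of the module Taylor components, and the coderivation $\mathrm d_{B'}$ acting on the right tensor factor. Applying this to $\mathrm d_{K_1}$ and to $\mathrm d_{K_2}$ one sees that $\mathrm d_{K_1}\otimes 1\otimes 1$ and $1\otimes 1\otimes\mathrm d_{K_2}$ each contain a copy of (the lift of) $\mathrm d_B$ acting on the \emph{middle} factor $\mathrm T(B[1])$, and the summand $-1\otimes 1\otimes\mathrm d_B\otimes 1\otimes 1$ is there precisely to cancel this double count. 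Writing $\widehat{\mathrm d_A}$, $\widehat{\mathrm d_{K_1}}$, $\widehat{\mathrm d_B}$, $\widehat{\mathrm d_{K_2}}$, $\widehat{\mathrm d_C}$ for, respectively, $\mathrm d_A$ on the outer left $\mathrm T(A[1])$, the module part of $\mathrm d_{K_1}$, $\mathrm d_B$ on the middle $\mathrm T(B[1])$ (now internal to the new bimodule), the module part of $\mathrm d_{K_2}$, and $\mathrm d_C$ on the outer right $\mathrm T(C[1])$ — all lifted to operators on $\mathrm T(A[1])\otimes K_1[1]\otimes\mathrm T(B[1])\otimes K_2[1]\otimes\mathrm T(C[1])$ by the Koszul sign rule, through the degree-$(-1)$ identification recalled above — one gets
\[
\mathrm d_{K_1\underline\otimes_B K_2}=\widehat{\mathrm d_A}+\widehat{\mathrm d_{K_1}}+\widehat{\mathrm d_B}+\widehat{\mathrm d_{K_2}}+\widehat{\mathrm d_C},
\]
whose corestriction to $(K_1\underline\otimes_B K_2)[1]$ reproduces~\eqref{eq-tayl-tens}.

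Then I would expand $\mathrm d_{K_1\underline\otimes_B K_2}^2$ into fifteen terms (five squares and ten anticommutators, all five operators having degree $1$). The squares $\widehat{\mathrm d_A}^2$, $\widehat{\mathrm d_B}^2$, $\widehat{\mathrm d_C}^2$ vanish since $\mathrm d_A,\mathrm d_B,\mathrm d_C$ are codifferentials on the respective tensor coalgebras; the anticommutators $\{\widehat{\mathrm d_A},\widehat{\mathrm d_B}\}$, $\{\widehat{\mathrm d_A},\widehat{\mathrm d_{K_2}}\}$, $\{\widehat{\mathrm d_A},\widehat{\mathrm d_C}\}$, $\{\widehat{\mathrm d_{K_1}},\widehat{\mathrm d_C}\}$, $\{\widehat{\mathrm d_B},\widehat{\mathrm d_C}\}$ vanish because in each of them the two operators act on disjoint blocks of tensor factors and hence anticommute by the Koszul rule; and $\{\widehat{\mathrm d_{K_1}},\widehat{\mathrm d_{K_2}}\}=0$ by a short deconcatenation bookkeeping — the two operators consume letters of the middle $\mathrm T(B[1])$ from opposite ends, touching disjoint sub-blocks, exactly as the left and right legs of a bicomodule commute. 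The six surviving terms regroup as
\[
\bigl(\widehat{\mathrm d_{K_1}}^2+\{\widehat{\mathrm d_A},\widehat{\mathrm d_{K_1}}\}+\{\widehat{\mathrm d_{K_1}},\widehat{\mathrm d_B}\}\bigr)+\bigl(\widehat{\mathrm d_{K_2}}^2+\{\widehat{\mathrm d_B},\widehat{\mathrm d_{K_2}}\}+\{\widehat{\mathrm d_{K_2}},\widehat{\mathrm d_C}\}\bigr);
\]
unwinding $\mathrm d_{K_1}^2=0$ on $\mathrm T(A[1])\otimes K_1[1]\otimes\mathrm T(B[1])$ (using $\widehat{\mathrm d_A}^2=\widehat{\mathrm d_B}^2=\{\widehat{\mathrm d_A},\widehat{\mathrm d_B}\}=0$) is exactly the vanishing of the first parenthesis, and $\mathrm d_{K_2}^2=0$ that of the second. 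Hence $\mathrm d_{K_1\underline\otimes_B K_2}^2=0$.

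The only real obstacle is sign bookkeeping: verifying that $-1\otimes 1\otimes\mathrm d_B\otimes 1\otimes 1$ removes the double-counted $\mathrm d_B$ with the correct sign, that the "disjoint-block" anticommutators truly cancel once the degree-$(-1)$ suspension identifications of Remarks~\ref{r-susp} and~\ref{r-signs} are inserted, and that $\{\widehat{\mathrm d_{K_1}},\widehat{\mathrm d_{K_2}}\}=0$ including signs. With the sign conventions of~\eqref{eq-tayl-tens} fixed once and for all, all of this is mechanical, and no input beyond the $A_\infty$-bimodule axioms for $K_1$ and $K_2$ enters.
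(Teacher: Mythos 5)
Your argument is correct and shares the paper's overall strategy: both proofs use cofreeness to reduce everything to the single identity that the operator~\eqref{eq-bicoder} squares to zero, and both then exploit the nilpotence of $\mathrm d_{K_1}$, $\mathrm d_{K_2}$ and $\mathrm d_B$. Where you genuinely differ is in how the remaining cross terms are made to cancel. The paper keeps the three summands of~\eqref{eq-bicoder} intact, drops their squares, and is left with the cross terms~\eqref{eq-bicoder-rew}; it then evaluates the first two lines of~\eqref{eq-bicoder-rew} explicitly (using that $\mathrm d_{K_1}$, $\mathrm d_{K_2}$ are bicoderivations and $\mathrm d_B$ a coderivation) and checks that the last line yields the same expression with the opposite sign. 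You instead refine the decomposition into five summands by splitting off the ``module parts'' of $\mathrm d_{K_1}$ and $\mathrm d_{K_2}$ from the lifted coderivations of $A$, $B$, $C$; this makes the role of the corrective term $-1\otimes 1\otimes\mathrm d_B\otimes 1\otimes 1$ (removal of the doubly counted middle $\mathrm d_B$) transparent, kills six of the ten anticommutators by disjointness of the tensor blocks (including $\{\widehat{\mathrm d_{K_1}},\widehat{\mathrm d_{K_2}}\}=0$, which needs the slightly finer ``opposite ends of the middle $\mathrm T(B[1])$'' argument you give), and lets the six surviving terms regroup verbatim into the two bimodule axioms $\mathrm d_{K_1}^2=0$ and $\mathrm d_{K_2}^2=0$. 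Your regrouping is arguably the cleaner way to see the cancellation, since it traces it directly back to the defining identities of $K_1$ and $K_2$ rather than to an explicit evaluation of both halves of~\eqref{eq-bicoder-rew}; the cost is the same sign bookkeeping through the degree $-1$ identification of Remarks~\ref{r-susp} and~\ref{r-signs}, which you correctly identify as the only delicate point and which, like the paper, you do not carry out in full detail.
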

The proof of Proposition~\ref{p-tensor} follows from the following argument: since the bicodifferential $\mathrm d_{K_1\underline\otimes_B K_2}$ is a twist of~\eqref{eq-bicoder}, it suffices to prove that~\eqref{eq-bicoder} squares to $0$.
Using the fact that $\mathrm d_B$, $\mathrm d_{K_1}$ and $\mathrm d_{K_2}$ all square to $0$, we are reduced to prove that
\begin{equation}\label{eq-bicoder-rew}
\begin{aligned}
&(\mathrm d_{K_1}\otimes 1\otimes 1)(1\otimes 1\otimes \mathrm d_{K_2})-(\mathrm d_{K_1}\otimes 1\otimes 1)(1\otimes 1\otimes\mathrm d_B\otimes 1\otimes 1)+\\
&(1\otimes 1\otimes \mathrm d_{K_2})(\mathrm d_{K_1}\otimes 1\otimes 1)-(1\otimes 1\otimes \mathrm d_{K_2})(1\otimes 1\otimes\mathrm d_B\otimes 1\otimes 1)-\\
&-(1\otimes 1\otimes\mathrm d_B\otimes 1\otimes 1)(\mathrm d_{K_1}\otimes 1\otimes 1+1\otimes 1\otimes \mathrm d_{K_2})
\end{aligned}
\end{equation}
vanishes.

Since $\mathrm d_{K_1}$ and $\mathrm d_{K_2}$ are bicoderivations, and $\mathrm d_B$ is a coderivation, the sum of the first and second line in~\eqref{eq-bicoder-rew} can be rewritten as
\[
\begin{aligned}
&1\otimes 1\otimes \mathrm d_B\otimes 1\otimes \mathrm d_C+(1\otimes 1\otimes \mathrm d_B\otimes (\mathrm{pr}_{K_2}\circ\mathrm d_{K_2}))(1\otimes 1\otimes \Delta_B\otimes 1\otimes 1)-\\
&-\mathrm d_A\otimes 1\otimes \mathrm d_B\otimes 1\otimes \mathrm d_A-((\mathrm{pr}_{K_1}\circ\mathrm d_{K_1})\otimes \mathrm d_B\otimes 1\otimes 1)(1\otimes 1\otimes \Delta_B\otimes 1\otimes 1),
\end{aligned}
\]
where we have used the short-hand notation
\begin{equation}\label{eq-short-h}
\mathrm{pr}_{K_1}\circ\mathrm d_{K_1}=(1\otimes (\mathrm{pr}_{K_1}\circ\mathrm d_{K_1})\otimes 1)(\Delta_A\otimes 1\otimes 1),
\end{equation}
$\mathrm{pr}_{K_1}$ denoting the projection from $\mathrm T(A[1])\otimes K_1[1]\otimes \mathrm T(B[1])$ onto $K_1[1]$ and {\em e.g.} $\Delta_A$ is the comultiplication of $\mathrm T(A[1])$; similar notation holds for $\mathrm{pr}_{K_2}\circ\mathrm d_{K_2}$.
The notation $\mathrm{pr}_{K_1}\circ\mathrm d_{K_1}$ means therefore simply that we select only that part of $\mathrm d_{K_1}$ given by the Taylor components $\mathrm d_{K_1}^{m,n}$, thus forgetting about the action of $\mathrm d_B$ and $\mathrm d_C$ on the left and on the right, and similarly for $(\mathrm{pr}_{K_1}\circ\mathrm d_{K_1})$.

Using the same arguments, together with the nilpotence of $\mathrm d_B$, yields the same expression for the last line of~\eqref{eq-bicoder-rew}, up to global minus sign, coming from Koszul's sign rule. 
\begin{Rem}
We point out that, in general, the right-hand side of~\eqref{eq-short-h} should be 
\[
(1\otimes (\mathrm{pr}_{K_1}\circ\mathrm d_{K_1})\otimes 1)(\Delta_A\otimes 1\otimes \Delta_B),
\]
but, in this framework, the second coproduct $\Delta_B$ would be redundant and would produce too many terms, due to the fact that there is already a coproduct $\Delta_B$, whence the reason, why we suppressed it in~\eqref{eq-short-h}.
\end{Rem}
Alternatively, straightforward computations involving quadratic relations w.r.t.\ the Taylor components of $\mathrm d_A$, $\mathrm d_C$ and $\mathrm d_{K_1\underline\otimes_B K_2}$, which in turn can be re-written (after a very tedious book-keeping of all signs involved) in terms of the quadratic relations between the Taylor components of the codifferentials on $A$, $B$, $C$, $K_1$ and $K_2$.
Such computations are similar to the computations in~\cite[Chapter 4]{L-H} in the case of right $A_\infty$-modules.

It is easy to verify that, if both $A$, $C$ are flat, then the Taylor component $m_{K_1\underline\otimes_B K_2}^{0,0}$ yields a structure of dg vector space on $K_1\underline\otimes_B K_2$ (while $B$ may be curved).

\subsection{The $A_\infty$-bar construction of an $A_\infty$-bimodule}\label{ss-1-1}
We consider two (possibly curved) $A_\infty$-algebras, and an $A_\infty$-$A$-$B$-bimodule $K$.
It is easy to verify that e.g.\ $A$ can be endowed with the structure of an $A_\infty$-$A$-$A$-bimodule, whose Taylor components are specified {\em via} the assignment $\mathrm d_A^{m,n}=\mathrm d_A^{m+1+n}$.

In particular, we may form the tensor product $A\underline\otimes_A K$, which has the structure of an $A_\infty$-$A$-$B$-bimodule, according to Proposition~\ref{p-tensor}; similarly, we may consider the $A_\infty$-$A$-$B$-bimodule $K\underline\otimes_B B$.

An $A_\infty$-algebra $A$ is said to be {\bf unital}, if it possesses an element $1$ of degree $0$, such that
\[
\mathrm m_A^2(1\otimes a)=\mathrm m_A^2(a\otimes 1)=a,\quad \mathrm m_A^n(a_1\otimes \cdots\otimes a_n)=0,\ n\neq 2,
\]
if $a_i=1$, for some $i=1,\dots,n$.
If $A$ is unital, and $K$ is an $A_\infty$-$A$-$B$-bimodule, then $K$ is (left-)unital w.r.t.\ $A$, if the identities hold true
\[
\mathrm m_K^{1,0}(1\otimes k)=k,\quad \mathrm m_K^{m,n}(a_1\otimes \cdots\otimes a_m\otimes k\otimes b_1\otimes \cdots\otimes b_n)=0,\ m\neq 1,\ n\geq 0,
\]
if $a_i=1$, for some $i=1,\dots,m$.

Now, there is a natural morphism $\mu$ of $A_\infty$-$A$-$B$-bimodules from $A\underline\otimes_A K$ to $K$: the cofreeness of $A_\infty$-$A$-$B$-bimodules implies that such a morphism is uniquely specified by its Taylor components
\[
\mu^{m,n}:A[1]^{\otimes m}\otimes \left(A\otimes A[1]^{\otimes q}\otimes K\right)[1]\otimes B[1]^{\otimes n}\cong A[1]^{\otimes m+1+q}\otimes K[1]\otimes B[1]^{\otimes n}\to K[1],\quad m,n,q\geq 0,
\]
all of degree $0$, which have to additionally satisfy an infinite family of quadratic relations involving the Taylor components of the $A_\infty$-structures on $A\underline\otimes_A K$ and $K$.
We notice that the isomorphism in the middle is of degree $-1$.

There is a natural candidate for the morphism $\mu$, namely,
\begin{equation}\label{eq-bar-constr}
\begin{aligned}
&\mu^{m,n}(a_1|\cdots|a_m|a\otimes (\widetilde a_1|\cdots|\widetilde a_q)\otimes k|b_1|\cdots|b_n)=\\
&=(-1)^{\sum_{i=1}^m (|a_i|-1)+|a|+\sum_{j=1}^q(|\widetilde a_j|-1)}\mathrm d_K^{m+1+q,n}(a_1|\cdots|a_m|a|\widetilde a_1|\cdots|\widetilde a_q|k|b_1|\cdots|b_n),\quad m,n,q\geq 0.
\end{aligned}
\end{equation}
Similar formul\ae\ hold true for the case of the $A_\infty$-$A$-$B$-bimodule $K\underline\otimes_B B$.
\begin{Prop}\label{p-bar}
For two (possibly curved) $A_\infty$-algebras $A$, $B$ and an $A_\infty$-$A$-$B$-bimodule $K$, there is a natural morphism $\mu$, defined by~\eqref{eq-bar-constr}, of $A_\infty$-$A$-$B$-bimodules from $A\underline\otimes_A K$ to $K$.

If $A$, $B$ are both flat, and $A$, $K$ are (left-)unital, then the $A_\infty$-morphism~\eqref{eq-bar-constr} is an $A_\infty$-quasi-isomorphism.
\end{Prop}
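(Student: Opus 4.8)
The plan is to prove the two assertions of the proposition separately: the first (that $\mu$ is an $A_\infty$-morphism) holds with no further hypotheses, while the second (that $\mu$ is a quasi-isomorphism) uses the flatness of $A,B$ and the left-unitality of $A,K$.

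\emph{$\mu$ is a morphism of $A_\infty$-$A$-$B$-bimodules.} By cofreeness of the bicomodules involved, the bicomodule map $\widehat\mu$ cogenerated by the Taylor components~\eqref{eq-bar-constr} is an $A_\infty$-morphism if and only if the composite of $\mathrm d_K\circ\widehat\mu-\widehat\mu\circ\mathrm d_{A\underline\otimes_A K}$ with the projection $\mathrm{pr}_K$ onto $K[1]$ vanishes. Now, read under the degree $-1$ identification $(A\underline\otimes_A K)[1]\cong A[1]\otimes\mathrm T(A[1])\otimes K[1]$ of Section~\ref{s-1}, formula~\eqref{eq-bar-constr} says precisely that $\widehat\mu$ is obtained by applying the single Taylor component $\mathrm d_K^{m+1+q,n}$ after the concatenation $\mathrm T(A[1])\otimes A[1]\otimes\mathrm T(A[1])\to\mathrm T(A[1])$ that merges the free generator into the $A$-bar. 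Combined with the fact that the $A_\infty$-$A$-$A$-bimodule structure on $A$ entering $A\underline\otimes_A K$ is given by $\mathrm d_A^{m,n}=\mathrm d_A^{m+1+n}$, this makes the vanishing above a direct (if lengthy) consequence of $\mathrm d_K^2=0$ together with the quadratic relations for $\mathrm d_A$, by the same mechanism as for the two-sided bar construction of ordinary bimodules and for the one-sided $A_\infty$-module bar constructions of~\cite[Chapter 4]{L-H}. The only genuine work is the bookkeeping of the Koszul signs produced by the various (de)suspensions, in the spirit of Remark~\ref{r-signs}. No flatness or unitality is needed here.

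\emph{Reduction of the second assertion to a bar-resolution statement.} Assume now that $A,B$ are flat and $A,K$ left-unital. Flatness makes both $(K,\mathrm m_K^{0,0})$ and $(A\underline\otimes_A K,\mathrm m^{0,0})$ into complexes over $\mathbb K$ (for the latter, apply the observation at the end of Section~\ref{s-1} with $C=B$; for the former, see Remark~\ref{r-bimod}), and by the first assertion $\mu^{0,0}$ is a morphism between them; since, $A$ and $B$ being flat, an $A_\infty$-quasi-isomorphism of $A_\infty$-bimodules is by definition an $A_\infty$-morphism whose $(0,0)$-component is a quasi-isomorphism of the underlying complexes, it suffices to prove that $\mu^{0,0}$ is a quasi-isomorphism. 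Reading off~\eqref{eq-tayl-tens} with $K_1=A$, $K_2=K$ and middle algebra $A$, one recognises $(A\underline\otimes_A K,\mathrm m^{0,0})=A\otimes\mathrm T(A[1])\otimes K$ as the (unreduced) two-sided bar complex of $K$, into which only the $A_\infty$-structure of $A$ and the left-action components $\mathrm m_K^{i,0}$ ($i\geq0$) of the bimodule structure on $K$ enter (the right-$B$-module structure of $K$ intervening only in the higher components $\mathrm m^{0,n}$, $n>0$, of $A\underline\otimes_A K$), and one recognises $\mu^{0,0}$, via~\eqref{eq-bar-constr}, as the associated augmentation $a\otimes(\widetilde a_1|\cdots|\widetilde a_q)\otimes k\mapsto\pm\,\mathrm m_K^{1+q,0}(a\otimes\widetilde a_1\otimes\cdots\otimes\widetilde a_q\otimes k)$. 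Writing $s'\colon K\to A\underline\otimes_A K$ for the degree $0$ map $s'(k)=\pm\,1\otimes k$, the left-unitality identity $\mathrm m_K^{1,0}(1\otimes k)=k$ gives $\mu^{0,0}\circ s'=\mathrm{id}_K$.

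\emph{The contracting homotopy.} It remains to contract $\mathrm{id}_{A\underline\otimes_A K}$ onto $s'\circ\mu^{0,0}$ by the classical ``extra degeneracy''. Define the degree $-1$ operator $s\colon A\underline\otimes_A K\to A\underline\otimes_A K$ by $s(a\otimes(\widetilde a_1|\cdots|\widetilde a_q)\otimes k)=\pm\,1\otimes(a|\widetilde a_1|\cdots|\widetilde a_q)\otimes k$, which inserts the unit $1\in A$ as the new free generator and pushes $a$ into the bar. Expanding $\mathrm m^{0,0}\circ s$ with~\eqref{eq-tayl-tens}, one finds: the summand using $\mathrm m_A^2(1\otimes a)=a$ returns the original element; the summands in which some $\mathrm m_A^n$ with $n\neq2$, or some $\mathrm m_K^{m,0}$ with $m\neq1$, is fed the inserted $1$ all vanish by the unitality of $A$ and $K$; the summand in which the left action collapses the whole bar onto $k$ produces $s'\circ\mu^{0,0}$; and each remaining summand equals, up to sign, $s$ applied to the matching summand of $\mathrm m^{0,0}$. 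After the sign in the definition of $s$ is chosen suitably, this yields the homotopy identity $\mathrm m^{0,0}\circ s+s\circ\mathrm m^{0,0}=\mathrm{id}_{A\underline\otimes_A K}-s'\circ\mu^{0,0}$. Together with $\mu^{0,0}\circ s'=\mathrm{id}_K$, this shows that $\mu^{0,0}$ is a homotopy equivalence, hence a quasi-isomorphism, which completes the proof.

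\emph{Expected main obstacle.} Conceptually this is the standard bar-resolution argument, with the roles of the hypotheses transparent: flatness of $A,B$ is what makes $\mathrm m^{0,0}$ square to zero; unitality of $A$ supplies the unit to insert and kills the higher products $\mathrm m_A^{\geq3}$ that would otherwise obstruct the homotopy; and left-unitality of $K$ is what makes $s'$ a section. The real difficulty is therefore purely computational: establishing the intertwining relation of the first part and the homotopy identity above with every Koszul sign in place, given the many degree $-1$ (de)suspension isomorphisms built into~\eqref{eq-tayl-tens} and~\eqref{eq-bar-constr}.
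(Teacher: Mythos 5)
Your proposal is correct and follows essentially the same route as the paper: the first assertion is reduced to the quadratic relations for $\mathrm d_A$ and $\mathrm d_K$ via cofreeness, and the second is established by exhibiting the section $k\mapsto 1\otimes k$ (the paper's $\nu^{0,0}$, Eq.~\eqref{eq-inv}) together with the extra-degeneracy homotopy $a\otimes(\widetilde a_1|\cdots|\widetilde a_q)\otimes k\mapsto 1\otimes(a|\widetilde a_1|\cdots|\widetilde a_q)\otimes k$ (the paper's $\sigma$, Eq.~\eqref{eq-homot}), using unitality of $A$ and $K$ exactly as you describe.
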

We first observe that the Taylor components in~\eqref{eq-bar-constr} have the right degree.
Then, the proof of Proposition~\ref{p-bar} consists in checking the aforementioned quadratic identities: in view of~\eqref{eq-tayl-tens}, and after a straightforward, but tedious, book-keeping of all signs involved, such quadratic identities can be re-written as the quadratic identities for the Taylor components of the $A_\infty$-structures on $A$ and $K$, whence the first claim follows.

If $A$, $B$ are both flat, then $\mathrm d_{A\underline\otimes_A K}^{0,0}$ endows $A\underline\otimes_A K$ with the structure of a dg vector space: $\mu$ is an $A_\infty$-quasi-isomorphism, if and only if its Taylor component $\mu^{0,0}$ (which is automatically a morphism of dg vector spaces) induces an isomorphism in cohomology.

If $A$ and $K$ are unital, we set 
\begin{align}
\label{eq-inv}\nu^{0,0}(k)&=1\otimes k,\\
\label{eq-homot}\sigma(a\otimes (\widetilde a_1|\cdots|\widetilde a_q)\otimes k)&=1\otimes(a|\widetilde a_1|\cdots|\widetilde a_q)\otimes k,\ q\geq 0.
\end{align}
It is not difficult to check that the degree of~\eqref{eq-inv} is $0$, while the degree of~\eqref{eq-homot} is $-1$.
Moreover, direct computations involving the fourth identity in~\eqref{eq-tayl-tens} and the fact that $A$ and $K$ are unital imply that~\eqref{eq-inv} is homotopically inverse to $\mu^{0,0}$, with explicit homotopy~\eqref{eq-homot}, whence the second claim follows.
\begin{Rem}\label{r-bar}
We observe that Proposition~\ref{p-bar} has been stated and proved in the framework of right $A_\infty$-modules in~\cite{L-H}.
\end{Rem}

\subsection{The $A_\infty$-bimodule structure on the bar resolution of the augmentation module}\label{ss-1-2}
For $V$ as in Section~\ref{s-0}, we consider the symmetric algebra $A=\mathrm S(V^*)$ and the exterior algebra $B=\wedge(V)$: both are unital dg algebras with trivial differential, $A$ is concentrated in degree $0$, while $B$ is non-negatively graded.
In particular, $A$ and $B$ can be viewed as flat, unital $A_\infty$-algebras, whose only non-trivial Taylor components are $\mathrm d_A^2$ and $\mathrm d_B^2$ respectively.

%We may view $K=\mathbb K$ as a left $A$-module and as a right $B$-module {\em via} the natural augmentation.

According to~\cite{CFFR}, $K=\mathbb K$ can be endowed with a non-trivial $A_\infty$-$A$-$B$-bimodule structure, which restricts to the natural augmentation left- and right-modules; non-triviality, here, means that there are non-trivial Taylor components $\mathrm d_K^{m,n}$, for both $m$ and $n$ non-zero, e.g.\
\[
\mathrm m_K^{1,1}(a\otimes 1\otimes b)=\langle b,a \rangle,\quad a\in V,\ b\in V^*,
\]
and $\langle\bullet,\bullet\rangle$ denotes the duality pairing between $V^*$ and $V$.
For a complete description of the $A_\infty$-$A$-$B$-bimodule structure on $K$, we refer to~\cite[Subsection 6.2]{CFFR}.

Since $A$, $B$ are flat, and $A$, $K$ are unital, Proposition~\ref{p-bar} implies that there is an $A_\infty$-quasi-isomorphism of $A_\infty$-$A$-$B$-bimodules from $A\underline\otimes_A K$ to $K$.
A direct computation implies that $A\underline\otimes_A K$ is a dg vector space concentrated in non-positive degrees; recalling~\eqref{eq-tayl-tens}, its $A_\infty$-$A$-$B$-bimodule structure is given by  
\begin{equation}\label{eq-bar-tayl}
\begin{aligned}
\mathrm d_{A\underline\otimes_A K}^{0,0}(a\otimes (\widetilde a_1|\cdots|\widetilde a_q)\otimes 1)&=s\!\left((a\widetilde a_1)\otimes (\widetilde a_2|\cdots|\widetilde a_q)\otimes 1+\sum_{i=1}^{q-1}(-1)^i a\otimes (\widetilde a_1|\cdots|\widetilde a_i\widetilde a_{i+1}|\cdots|\widetilde a_q)\otimes 1+\right.\\
&\phantom{=}\left.+(-1)^q a\otimes (\widetilde a_1|\cdots|\widetilde a_{q-1})\otimes \widetilde a_q(0)\right),\\
\mathrm d_{A\underline\otimes_A K}^{1,0}(a_1|a\otimes (\widetilde a_1|\cdots|\widetilde a_q)\otimes 1)&=s\!\left((a a_1)\otimes (\widetilde a_1|\cdots|\widetilde a_q)\otimes 1\right),\\
\mathrm d_{A\underline\otimes_A K}^{0,n}(a\otimes (\widetilde a_1|\cdots|\widetilde a_q)\otimes 1|b_1|\cdots|b_n)&=(-1)^q\sum_{l=0}^q s\!\left(a_1\otimes (\widetilde a_1|\cdots|\widetilde a_l)\otimes s^{-1}(\mathrm d_K^{q-l,n}(\widetilde a_{l+1}|\cdots|\widetilde a_q|1|b_1|\cdots|b_n))\right),
\end{aligned}
\end{equation}
and in all other cases, the Taylor components are trivial.

The first identity in~\eqref{eq-bar-tayl} yields the identification between the dg vector space $\left(A\underline\otimes_A K,\mathrm d_{A\underline\otimes_A K}^{0,0}\right)$ identifies with the bar complex of the left augmentation module $K=\mathbb K$ over $A$.

Further, the identities~\eqref{eq-bar-tayl} imply that the left $A_\infty$-$A$-module structure on the bar complex $A\underline\otimes _A K$ of $K$ is the standard one, while the non-triviality of the $A_\infty$-$A$-$B$-bimodule structure on $K$ yields non-triviality of the right $A_\infty$-$B$-module structure on $A\underline\otimes_A K$.

\section{The Koszul complex of $A=\mathrm S(V^*)$: a brief {\em memento}}\label{s-2}
For $V$ as in Section~\ref{s-0}, we consider the symmetric algebra $A=\mathrm S(V^*)$ and the exterior algebra $B=\wedge(V)$.% as in Subsection~\ref{ss-1-2}.

We further consider the Koszul complex $\mathrm K(V)$ of $A$. As a graded vector space,
\[
\mathrm K^q(V)=\wedge^{-q}_{\mathcal O_V} \Omega^1_{\mathcal O_V/\mathbb K},\ q\geq 0,
\]
where $\Omega^1_{\mathcal O_V/\mathbb K}$ denotes the module of Kaehler differentials on $\mathcal O_V=\mathrm S(V^*)=A$ as a $\mathbb K$-algebra; in particular, $\mathrm K(V)$ is non-positively graded.
The differential $\partial$ on $\mathrm K(V)$ is induced by the (left) contraction w.r.t.\ the Euler vector field on $V$; further, $\mathrm K(V)$ admits an obvious left $A$-action, and contraction w.r.t.\ polyvector fields on $V$ induces by restriction (and keeping track of Koszul's sign rule) a right $B$-action on $\mathrm K(V)$.
In particular, $\mathrm K(V)$ admits the structure of an $A_\infty$-$A$-$B$-bimodule, whose only non-trivial Taylor components are labeled by pairs of indices $(m,n)$, such that $m+n\leq 1$.
For later computations, we choose a $\mathbb K$-basis $\{x_i\}$ on $V^*$ as in Section~\ref{s-0}: in particular, we may write
\[
\mathrm K(V)\cong \mathbb K[x_i,\theta_j],
\]
where $\{\theta_j\}$ denotes a set of odd coordinates of degree $-1$, which anticommute with each other and commute with $x_i$; w.r.t.\ the previous algebra isomorphism, $x_i\mapsto x_i$, $\mathrm d x_i\mapsto \theta_i$, and $\partial$ is uniquely determined by the graded Leibniz rule (from the left) and by $\partial (x_i)=0$, $\partial (\theta_i)=x_i$.

Furthermore, we may also write $B=\mathbb K[\partial_{\theta_j}]$, where the partial derivative $\partial_{\theta_j}$ has degree $1$, and acts in an obvious way from the left on $\mathrm K(V)$: thus, the right $B$-action on $\mathrm K(V)$ takes the explicit form
\[
\mathrm K(V)\otimes B\ni \eta\otimes b_1\mapsto (-1)^{|\eta||b_1|} b_1(\eta).
\]
We finally observe that $(\mathrm K(V),\partial)$ is a free resolution of the left $A$-module $K$ {\em via} augmentation; it is also a free resolution of the right $B$-module $K$ {\em via} augmentation, because of the isomorphism $B\cong (\mathbb K[\theta_j])[-d]$ induced by contraction w.r.t.\ polyvector fields.

\section{An explicit $A_\infty$-quasi-isomorphism between the bar and the Koszul complex of $A$}\label{s-3}
We use the same notation as in the preceding sections; we only observe that in the whole Section, $A=\mathrm S(V^*)$, $B=\wedge(V)$ and $K=\mathbb K$ with the $A_\infty$-$A$-$B$-bimodule structure from~\cite{CFFR}.

Since $A\underline\otimes_A K$ and $\mathrm K(V)$ are both resolutions of the left augmentation module $K=\mathbb K$ over $A$, arguments from abstract (co)homological algebra imply that they are quasi-isomorphic to each other as complexes of free left $A$-modules.
More precisely, the quasi-isomorphism from $\mathrm K(V)$ to $A\underline\otimes_A K$ as complexes of left $A$-modules has an explicit form, namely
\begin{equation}\label{eq-skew}
\Phi(\theta_{i_1}\cdots\theta_{i_q})=\sum_{\sigma\in\mathfrak S_q}(-1)^\sigma 1\otimes (x_{\sigma(i_1)}|\cdots|x_{\sigma(i_q)})\otimes 1,\quad 1\leq i_1<\cdots< i_q\leq d.
\end{equation}
It suffices to define the morphism $\Phi$ on monomials of the form $\theta_{i_1}\cdots\theta_{i_q}$, and then extend it $A$-linearly on the left.
It follows immediately that $\Phi$ is of degree $0$ and commutes with left $A$-action.
%, recalling the first identity in~\eqref{eq-bar-tayl}.
An easy computation shows that $\Phi$ commutes with differentials. It is a quasi-isomorphism since $\Phi(1)=1$.
%\begin{Rem}\label{r-quasi-inv}
%We observe that $\Phi$ is a quasi-isomorphism because of the existence of an inverse up to homotopy: the ``quasi-inverse" to $\Phi$ is explicitly given by
%\[
%\Psi(1\otimes (\widetilde a_1|\cdots|\widetilde a_q)\otimes 1)=\left(\int_0^1 (\partial_{i_1}\widetilde a_1)(t_1x)\left(\int_0^{t_1} (\partial_{i_2}\widetilde a_2)(t_2x)\left(\cdots \int_0^{t_{q-1}} (\partial_{i_q}\widetilde a_q)(t_qx)\mathrm d t_q\right)\cdots \mathrm d t_2\right)\mathrm d t_1\right)\theta_{i_1}\cdots\theta_{i_q}.
%\]
%\end{Rem}
\begin{Thm}\label{t-koszul-bar-1}
For a finite-dimensional $\mathbb K$-vector space $V$, the morphism~\eqref{eq-skew} extends to a quasi-isomorphism of $A_\infty$-$A$-$B$-bimodules from $\mathrm K(V)$ to $A\underline\otimes_A K$, where the $A_\infty$-$A$-$B$-bimodule structures on $\mathrm K(V)$ and $A\underline\otimes_A K$ are described in Sections~\ref{s-2} and~\ref{ss-1-1} respectively.
%Sections~\ref{s-2} and Subsection~\ref{ss-1-2} respectively.
\end{Thm}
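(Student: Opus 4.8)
The plan is to promote the degree-zero chain map $\Phi$ of~\eqref{eq-skew} to a full $A_\infty$-morphism $\Phi_\bullet$ of $A_\infty$-$A$-$B$-bimodules by specifying all its Taylor components $\Phi^{m,n}$ and verifying the infinite family of quadratic $A_\infty$-morphism identities. Since both $\mathrm K(V)$ and $A\underline\otimes_A K$ are honest complexes of \emph{free} left $A$-modules (the only nontrivial left-module Taylor component on each side is $\mathrm d^{1,0}$, multiplication by $A$), it is natural to demand that $\Phi_\bullet$ be strictly left $A$-linear, i.e. $\Phi^{m,n}=0$ whenever $m>0$. Thus the only data to be constructed are the components
\[
\Phi^{0,n}\colon \mathrm K(V)[1]\otimes B[1]^{\otimes n}\to \left(A\underline\otimes_A K\right)[1],\qquad n\geq 0,
\]
with $\Phi^{0,0}=s\circ\Phi\circ s^{-1}$ the suspension of~\eqref{eq-skew}. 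First I would record, using the explicit formul\ae~\eqref{eq-bar-tayl} for the $A_\infty$-$A$-$B$-bimodule structure on $A\underline\otimes_A K$ and the description in Section~\ref{s-2} of the structure on $\mathrm K(V)$ (only $\partial=\mathrm d^{0,0}$, left multiplication $\mathrm d^{1,0}$, and the single right action $\mathrm d^{0,1}$ coming from contraction with $B=\wedge(V)$ being nonzero), exactly which $A_\infty$-morphism relations are nonvacuous; the strict left-linearity kills most terms, leaving a family indexed by $n$ that involves $\Phi^{0,n}$, $\Phi^{0,n-1}$, the differentials $\partial$ on both sides, the right $B$-actions, and the higher components $\mathrm d_K^{q,n}$ of the CFFR bimodule on $K$.

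Next I would guess the correct formula for $\Phi^{0,n}$. The heuristic is the same "anti-symmetrization of bar elements" as in~\eqref{eq-skew}: a monomial $\theta_{i_1}\cdots\theta_{i_q}$ contracted against $b_1\otimes\cdots\otimes b_n\in B^{\otimes n}$ should be sent to a sum, over ways of consuming some of the $\theta$'s against the $b_j$'s via the pairing $\langle\bullet,\bullet\rangle$ and the higher Taylor components $\mathrm d_K^{\bullet,\bullet}$ of the bimodule $K$, of anti-symmetrized bar expressions built from the remaining $\theta$'s, with the leftover $\theta$'s absorbed into the $K=\mathbb K$-slot of $A\underline\otimes_A K$ via the CFFR structure. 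Concretely I expect something like a signed sum over shuffles / order-preserving surjections, each term of the shape $1\otimes(x_{\sigma(i_{j_1})}|\cdots)\otimes s^{-1}\mathrm d_K^{?,n}(\cdots)$, mirroring the third line of~\eqref{eq-bar-tayl}. A reasonable first attempt is to set $\Phi^{0,n}$ to be the composite: antisymmetrize the $\theta$'s as in~\eqref{eq-skew}, then apply the right $B$-action machinery of $A\underline\otimes_A K$ built from $\mathrm d_K^{\bullet,n}$; one may even be able to define $\Phi_\bullet$ invariantly as the unique $A_\infty$-morphism whose only "new" ingredient beyond~\eqref{eq-skew} is dictated by functoriality of $(A\underline\otimes_A -)$ applied to the canonical strict map $\mathrm K(V)\to A\underline\otimes_A\mathrm K(V)\to A\underline\otimes_A K$ — but since $\mathrm K(V)$ is only an $A_\infty$-bimodule with $m+n\le 1$, care is needed and the direct combinatorial definition is safer.

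Having a candidate, the verification splits into a base case and an inductive family of identities. The $n=0$ case is precisely the already-checked statement that $\Phi$ commutes with $\partial$ and sends $1$ to $1$ (hence is a quasi-isomorphism of dg $A$-modules, since both complexes resolve $K$). For $n\geq 1$ one must check the relation expressing that $\Phi^{0,n}\circ(\text{right }B\text{-actions on }\mathrm K(V)) \pm \partial\circ\Phi^{0,n} \pm \Phi^{0,n-1}\circ(\cdots) \pm (\text{right }B\text{-action on }A\underline\otimes_A K)\circ\Phi^{0,n}$ vanishes, where on the right one substitutes~\eqref{eq-bar-tayl}; unwinding the definition of $\mathrm d_{A\underline\otimes_A K}^{0,n}$ reduces everything to the quadratic $A_\infty$-relations already satisfied by the Taylor components $\mathrm d_K^{q,n}$ of the CFFR bimodule on $K$, together with the combinatorial identity that antisymmetrization over $\mathfrak S_q$ is compatible with splitting off a sub-multiset of the $\theta$'s. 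I expect the main obstacle to be precisely this: pinning down the signs in $\Phi^{0,n}$ so that the shuffle signs from~\eqref{eq-skew}, the Koszul signs from the de-suspension isomorphism in Remark~\ref{r-susp}, the $(-1)^q$-type signs in~\eqref{eq-bar-tayl}, and the internal signs of the $\mathrm d_K^{q,n}$ all conspire to cancel — a lengthy but ultimately mechanical book-keeping of the kind already flagged as "tedious" in the proofs of Propositions~\ref{p-tensor} and~\ref{p-bar}. Once the relations hold, $\Phi_\bullet$ is an $A_\infty$-morphism whose linear term $\Phi^{0,0}$ is a quasi-isomorphism, hence $\Phi_\bullet$ is an $A_\infty$-quasi-isomorphism, proving the theorem; combined with Proposition~\ref{p-bar} and the composite $\mathrm K(V)\hookrightarrow A\underline\otimes_A K\to K$ this also yields Theorem~\ref{t-koszul-bar}.
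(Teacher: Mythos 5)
Your proposal diverges from the paper at the very first step, and the divergence hides the one idea that actually makes the theorem true. The paper extends $\Phi$ \emph{strictly}: the $(0,0)$-component is (the suspension of) $\Phi$ and \emph{every} higher Taylor component is set to zero, including the $\Phi^{0,n}$, $n\geq 1$, that you propose to construct. With that choice the $A_\infty$-morphism relations collapse to exactly two families of identities, \eqref{eq-A_inf-1} and \eqref{eq-A_inf-2}: the single right-action component $\mathrm d_{A\underline\otimes_A K}^{0,1}$ must reproduce, on the image of $\Phi$, the contraction action of $B=\wedge(V)$ on $\mathrm K(V)$, and all components $\mathrm d_{A\underline\otimes_A K}^{0,n}$ with $n\geq 2$ must \emph{vanish} on the image of $\Phi$. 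So no higher components of $\Phi$ are needed at all; what is needed instead is a concrete vanishing statement about the bimodule structure on $K$. Your ansatz for $\Phi^{0,n}$ (``antisymmetrize, then apply the right $B$-action machinery built from $\mathrm d_K^{\bullet,n}$'') is never pinned down, and as stated it does not even typecheck: a Taylor component of a morphism must have degree $0$, while the operators $\mathrm d^{0,n}$ you would be post-composing with have degree $1$.

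The genuine gap is your claim that the verification ``reduces everything to the quadratic $A_\infty$-relations already satisfied by the Taylor components $\mathrm d_K^{q,n}$'' plus shuffle combinatorics. It does not, and it cannot: the statement is false for a generic $A_\infty$-$A$-$B$-bimodule structure on $K=\mathbb K$ restricting to the augmentation actions, so no argument that uses only the formal $A_\infty$-relations can prove it. The paper's proof uses the specific graphical construction of $\mathrm d_K^{m,n}$ from~\cite[Subsection 6.2]{CFFR}: a degree count shows that any admissible graph contributing to $\mathrm d_K^{q-l,n}(x_{\sigma(i_{l+1})}|\cdots|x_{\sigma(i_q)}|1|b_1|\cdots|b_n)$ would need $n+q-l-1>q-l$ edges landing on $q-l$ vertices each of which can absorb at most one derivative (the arguments $x_i$ are linear), so for $n\geq 2$ no such graph exists and \eqref{eq-A_inf-2} holds; for $n=1$ exactly one graph survives, its weight is computed to be $(-1)^p/p!$, and \eqref{eq-contr-gr} then identifies $\mathrm d_{A\underline\otimes_A K}^{0,1}\circ\Phi$ with $\Phi$ composed with the Koszul contraction, giving \eqref{eq-A_inf-1}. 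This vanishing lemma — the ``peculiarity of the bimodule structure on $K$'' announced in the introduction — is the mathematical content of the theorem, and your proposal never engages with it; everything you defer as ``mechanical sign book-keeping'' is in fact the part of the argument that requires input beyond the axioms.
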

\begin{proof}
We know that~\eqref{eq-skew} is a morphism of degree $0$ from $\mathrm K(V)$ to $A\underline\otimes_A K$: we declare (the conjugation w.r.t.\ $s$ of) $\Phi$ to be the $(0,0)$-th Taylor component of the desired $A_\infty$-quasi-isomorphism, while for $(m,n)$ such that $m+n\geq 1$, we set simply $0$.

By the previous arguments, the only non-trivial identities to check are
\begin{align}
\label{eq-A_inf-1} \mathrm d_{A\underline\otimes_A K}^{0,1}(\Phi(\eta)|b_1)&=\Phi(\mathrm d_{\mathrm K(V)}^{0,1}(\eta|b_1)),\\
\label{eq-A_inf-2} \mathrm d_{A\underline\otimes_A K}^{0,n}(\Phi(\eta)|b_1|\cdots|b_n)&=0,\quad n\geq 2,
\end{align}
for $b_i$, $i=1,\dots,n$, resp.\ $\eta$, a general element of $B$, resp.\ $\mathrm K(V)$.

We begin by proving Identity~\eqref{eq-A_inf-2}: $A$-linearity implies that we may take $\eta$ of the form $\theta_{i_1}\cdots\theta_{i_q}$, $1\leq i_1<\cdots<i_q\leq d$.
Recalling now Identity~\eqref{eq-skew},
% and the second Identity in~\eqref{eq-bar-tayl}
we rewrite the left-hand side in~\eqref{eq-A_inf-2} as
\[
\mathrm d_{A\underline\otimes_A K}^{0,n}(\Phi(\eta)|b_1|\cdots|b_n)=(-1)^q\sum_{l=0}^q\sum_{\sigma\in \mathfrak S_q}(-1)^\sigma s\!\left(1\otimes (x_{\sigma(i_1)}|\cdots|x_{\sigma(i_l)})\otimes s^{-1}(\mathrm d_K^{q-l,n}(x_{\sigma(i_{l+1})}|\cdots|x_{\sigma(i_q)}|1|b_1|\cdots|b_n)\right).
\]
We now analyze the last factor on the right-hand side: degree reasons imply that, for $0\leq l\leq q$,
\[
-(q-l)-1+\sum_{j=1}^n(|b_j|-1)+1\overset{!}=-1\Longleftrightarrow \sum_{j=1}^n |b_j|=n+q-l-1>q-l,
\]
because $n\geq 2$ by assumption.

We recall now from~\cite[Subsection 6.2]{CFFR} that the Taylor components of the $A_\infty$-$A$-$B$-bimodule structure on $K$ are constructed explicitly {\em via} admissible graphs, in a way reminiscent of Kontsevich's graphical technique of~\cite{K}: using the same notation of~\cite{K}, admissible graphs of type $(m,n)$ (elements of $\mathcal G_{m,n}$) are graphs embedded in $\mathbb R\sqcup \mathbb H$ with $m$ vertices of the first type ({\em i.e.} lying in the complex upper half-plane $\mathbb H$), $n$ vertices of the second type ({\em i.e.} $n$ ordered vertices on the real axis $\mathbb R$), and with a certain number of oriented edges between them.
In the present framework, we consider typically elements of $\mathcal G_{0,m+1+n}$, where to the first $m$ vertices of the second type we associate elements of $A$, to the $m+1$-st vertex of the second type $1$ as an element of $\mathbb K$, and to the last $n$ vertices of the second type we associate elements of $B$: accordingly,  oriented edges are associated to elements of $B$, which we view as translation-invariant poly-derivations acting on $A=\mathcal O_V$.
To such admissible graphs are associated polydifferential operators on $A$, $B$ and $K$ with values in $K$ (by the obvious rule that oriented edges correspond to derivatives) and integral weights, for whose precise treatment we refer to~\cite[Subsection 6.2]{CFFR} again: suffice it to recall here that the integral weight of a given admissible graph $\Gamma$ in $\mathcal G_{0,m+1+n}$ is the integral over the compactified configuration space $\mathcal C_{0,m+1+n}^+$ of $m+1+n$ ordered points on the real axis (modulo rescalings and real translations) of a differential form depending explicitly on $\Gamma$, roughly defined {\em via} the rule that a closed $1$-form is associated to an edge connecting two vertices.

The previous strict inequality, which is a consequence of the non-vanishing of the integral weight of any admissible graph appearing in the formula for $\mathrm d_K^{q-l,n}$, implies the claim: in fact, any admissible graph $\Gamma$ in $\mathcal G_{0,n+q-l-1}$ in $\mathrm d_K^{q-l,n}$ must have exactly $n+q-l-1$ arrows departing from the vertices on the right-hand side of the $q-l+1$-st vertex and incoming on the vertices on the left-hand side; no other arrows or loops are allowed.
Since $n\geq 2$ and $A$, $B$ and $K$ are unital, then all vertices (except the $q-l+1$-st vertex) must be at least univalent: more precisely, the vertices on the left-hand side of the $q-l+1$-st vertex must have exactly one incoming arrow (because of degree reasons), while the vertices on the right-hand side must have at least one outgoing arrow, and the previous strict inequality proves that no such graphs exist.

It remains to prove Identity~\eqref{eq-A_inf-1}.
We first evaluate the right-hand side: using the isomorphism at the end of Section~\ref{s-2}, we may write $b_1=\partial_{\theta_{j_1}}\cdots\partial_{\theta_{j_p}}$, whence the right-hand side takes the form $(-1)^{(p+1)q}\Phi(b_1(\eta))$.

The left-hand side of Identity~\eqref{eq-A_inf-1} has the explicit form
\[
\begin{aligned}
\mathrm d_{A\underline\otimes_A K}^{0,1}(\Phi(\eta)|b_1)&=(-1)^q\sum_{l=0}^q\sum_{\sigma\in \mathfrak S_q}(-1)^\sigma s\!\left(1\otimes (x_{\sigma(i_1)}|\cdots|x_{\sigma(i_l)})\otimes s^{-1}(\mathrm d_K^{q-l,1}(x_{\sigma(i_{l+1})}|\cdots|x_{\sigma(i_q)}|1|b_1)\right)=\\
&=(-1)^q\sum_{\sigma\in \mathfrak S_q}(-1)^\sigma s\!\left(1\otimes (x_{\sigma(i_1)}|\cdots|x_{\sigma(i_{q-p})})\otimes s^{-1}(\mathrm d_K^{p,1}(x_{\sigma(i_{q-p+1})}|\cdots|x_{\sigma(i_q)}|1|b_1)\right),
\end{aligned}
\]
where the second equality follows because of degree reasons.

First, if $q\leq p-1$, both sides of Identity~\eqref{eq-A_inf-1} vanish: this follows immediately from the previous formul\ae.
It remains therefore to prove the claim in the case $q\leq p$.

We now take a closer look at the left-hand side of Identity~\eqref{eq-A_inf-1}: we need to understand, in this particular case, the Taylor component $\mathrm d_K^{p,1}$.
We assume $p\geq 1$, because the case $p=0$ follows immediately by direct computations and previous considerations.
In view of~\cite[Subsection 6.2]{CFFR}, $\mathrm d_K^{p,1}$ is a sum over admissible graphs $\Gamma$ in $\mathcal G_{0,p+2}$: in fact, there is only one such admissible graph contributing non-trivially, pictorially
\bigskip
\begin{center}
\resizebox{0.55 \textwidth}{!}{\begin{picture}(0,0)%
\epsfig{file=contr_gr.pstex}%
\end{picture}%
\setlength{\unitlength}{3947sp}%
\begingroup\makeatletter\ifx\SetFigFont\undefined%
\gdef\SetFigFont#1#2#3#4#5{%
  \reset@font\fontsize{#1}{#2pt}%
  \fontfamily{#3}\fontseries{#4}\fontshape{#5}%
  \selectfont}%
\fi\endgroup%
\begin{picture}(8274,3096)(3139,-6491)
\put(6166,-5266){\makebox(0,0)[lb]{\smash{{\SetFigFont{20}{24.0}{\rmdefault}{\mddefault}{\updefault}{\color[rgb]{0,0,0}$\cdots$}%
}}}}
\put(9241,-6406){\makebox(0,0)[lb]{\smash{{\SetFigFont{20}{24.0}{\rmdefault}{\mddefault}{\updefault}{\color[rgb]{0,0,0}$b_1$}%
}}}}
\put(8341,-6406){\makebox(0,0)[lb]{\smash{{\SetFigFont{20}{24.0}{\rmdefault}{\mddefault}{\updefault}{\color[rgb]{0,0,0}$1$}%
}}}}
\put(3886,-6406){\makebox(0,0)[lb]{\smash{{\SetFigFont{20}{24.0}{\rmdefault}{\mddefault}{\updefault}{\color[rgb]{0,0,0}$x_{\sigma(i_{q-p+1})}$}%
}}}}
\put(7306,-6406){\makebox(0,0)[lb]{\smash{{\SetFigFont{20}{24.0}{\rmdefault}{\mddefault}{\updefault}{\color[rgb]{0,0,0}$x_{\sigma(i_q)}$}%
}}}}
\put(5161,-6391){\makebox(0,0)[lb]{\smash{{\SetFigFont{20}{24.0}{\rmdefault}{\mddefault}{\updefault}{\color[rgb]{0,0,0}$x_{\sigma(i_{q-p+2})}$}%
}}}}
\end{picture}%
}\\
\text{Figure 1 -  The only admissible graph contributing non-trivially to $\mathrm d_K^{p,1}(x_{\sigma(i_{q-p+1})}|\cdots|x_{\sigma(i_q)}|1|b_1)$} \\
\end{center}
\bigskip

The differential operator $\mathcal O_\Gamma$ associated to the admissible graph $\Gamma$ as in Figure 1 can be explicitly evaluated, following the prescriptions in~\cite[Subsection 6.2]{CFFR}, namely
\begin{equation}\label{eq-contr-gr}
s^{-1}(\mathrm d_K^{p,1}(x_{\sigma(i_{q-p+1})}|\cdots|x_{\sigma(i_q)}|1|b_1)=\frac{(-1)^p}{p!} b_1(\theta_{\sigma(i_{q-p+1})}\cdots\theta_{\sigma(i_q)}),
\end{equation}
%The numerical coefficient $(-1)^p/p!$ in front of the right-hand side of Identity~\eqref{eq-contr-gr} is determined by the integral weight of $\mathcal O_\Gamma$, which is computed using the orientation conventions for the (compactified) configuration space $\mathcal C_{0,p+2}^+$ and the $4$-colored propagators from~\cite[Subsubsection 5.3.2]{CFFR}.
%Here, we have used the section of the $G_2=\mathbb R\ltimes \mathbb R^+$-bundle $\mathrm{Conf}_{0,p+2}^+\to \mathcal C_{0,p+2}^+$, which fixes to $0$ the $p+1$-st vertex and to $1$ the $p+2$-nd vertex of the second type, see also~\cite[Subsection 5.2]{CFFR}

The claim now follows from Identity~\eqref{eq-contr-gr}.
\end{proof}

\subsection{A final remark}\label{ss-3-1}
In this final Subsection, we want to point out that the techniques portrayed in Sections~\ref{s-1},~\ref{s-2} and in the present one apply as well to the more general situation examined in~\cite{CFFR}: namely, for any two subspaces $U_i$, $i=1,2$, of a finite-dimensional $\mathbb K$-vector space $V$, we may associate an $A_\infty$-category with two objects, {\em i.e.} $U_1$ and $U_2$, two $A_\infty$-algebras $A$ and $B$ and an $A_\infty$-$A$-$B$-bimodule $K$.

More explicitly, $A$, resp.\ $B$, is the $A_\infty$-algebra associated to the graded vector space of global (regular) sections of the exterior algebra of the normal bundle of $U_1$, resp.\ of $U_2$, in $V$ (with obvious product and trivial differential); $K$ is the graded vector space of global (regular) sections of the exterior algebra of the quotient bundle $TV/(TU_1+TU_2)$ over $U_1\cap U_2$, and the $A_\infty$-bimodule structure extends the natural left $A$- and right $B$-action: of course, $A$, resp.~$B$, resp.~$K$, must be understood as the endomorphism space of $U_1$, resp.\ of $U_2$, resp.\ the space of morphisms from $U_1$ to $U_2$, declaring trivial the remaining one.
We do not indulge in its explicit construction: suffice it to say that it involves (once again) Kontsevich's diagrammatic techniques, and we observe that it reduces, when $U_1=V$ and $U_2=\{0\}$, to the one we have made explicit in the previous computations.

As already remarked in~\cite[Subsection 7.3]{CFFR}, there is also a Koszul complex $\mathrm K(V,U_1,U_2)$ {\em e.g.} for $A$, whose construction is similar to the one of $\mathrm K(V)$ in Section~\ref{s-2}, with obvious due changes; again, $\mathrm K(V,U_1,U_2)$ can be given the structure of a DG $A$-$B$-bimodule (hence, of an $A_\infty$-$A$-$B$-bimodule).

Furthermore, there is a natural (graded) version of the quasi-isomorphism~\eqref{eq-skew} from the Koszul complex $\mathrm K(V,U_1,U_2)$ to the bar resolution of $K$ as a left $A$-module: then, a more involved vanishing lemma, proved in the same spirit of Theorem~\ref{t-koszul-bar-1}, Identity~\eqref{eq-A_inf-1}, and a more refined version of Identity~\eqref{eq-A_inf-2}, imply that such a quasi-isomorphism extends to an $A_\infty$-quasi-isomorphism of $A$-$B$-bimodules.
In particular, Theorem~\ref{t-koszul-bar} holds true in the more general situation of~\cite{CFFR}.

Since Keller's condition in~\cite{Sh2} holds true in the more general situation already examined, {\em i.e.} for a DG category with two objects $U_i$, $i=1,2$, and with spaces of morphisms given by $A$, $B$ (the endomorphisms of $U_1$ and $U_2$ respectively) and $\mathrm K(V,U_1,U_2)$ (the morphisms from $U_1$ to $U_2$), a natural question is, if it is possible to formulate and prove the main result of~\cite{Sh2} in this more general setting, using Tamarkin's results instead of Kontsevich's.

\begin{bibdiv}
\begin{biblist}

\bib{CFFR}{article}{
 author={Calaque, Damien},
   author={Felder, Giovanni},
     author={Ferrario, Andrea},
       author={Rossi, Carlo A.},
 title={Bimodules and branes in deformation quantization },
 eprint={arXiv:0908.2299},
 date={2009}
}

\bib{CFR}{article}{
 author={Calaque, Damien},
 author={Felder, Giovanni},
 author={Rossi, Carlo A.},
 title={Deformation quantization with generators and relations },
 eprint={arXiv:0911.4377},
 date={2009}
}

\bib{CF}{article}{
  author={Cattaneo, Alberto S.},
  author={Felder, Giovanni},
  title={Coisotropic submanifolds in Poisson geometry and branes in the
  Poisson sigma model},
  journal={Lett. Math. Phys.},
  volume={69},
  date={2004},
  pages={157--175},
  issn={0377-9017},
  review={\MR{2104442 (2005m:81285)}},
}

\bib{CF2}{article}{
  author={Cattaneo, Alberto S.},
  author={Felder, Giovanni},
  title={Relative formality theorem and quantisation of coisotropic
  submanifolds},
  journal={Adv. Math.},
  volume={208},
  date={2007},
  number={2},
  pages={521--548},
  issn={0001-8708},
  review={\MR{2304327 (2008b:53119)}},
}

\bib{Kel}{article}{
  author={Keller, Bernhard},
  title={Introduction to $A$-infinity algebras and modules},
  journal={Homology Homotopy Appl.},
  volume={3},
  date={2001},
  number={1},
  pages={1--35 (electronic)},
  issn={1512-0139},
  review={\MR{1854636 (2004a:18008a)}},
}

\bib{Kel2}{article}{
 author={Keller, Bernhard},
 title={Derived invariance of higher structures on the Hochschild complex},
 eprint={http://people.math.jussieu.fr/~keller/publ/dih.pdf},
 date={2003}
}

\bib{K}{article}{
  author={Kontsevich, Maxim},
  title={Deformation quantization of Poisson manifolds},
  journal={Lett. Math. Phys.},
  volume={66},
  date={2003},
  number={3},
  pages={157--216},
  issn={0377-9017},
  review={\MR{2062626 (2005i:53122)}},
}

\bib{L-H}{article}{
 author={Lef\`evre-Hasegawa, Kenji},
 title={Sur les $A_\infty$-cat\'egories},

eprint={http://people.math.jussieu.fr/~keller/lefevre/TheseFinale/tel-00007761.pdf},
 date={2003}
}

\bib{Sh1}{article}{
 author={Shoikhet, Boris},
 title={Kontsevich formality and PBW algebras},
 eprint={arXiv:0708.1634},
 date={2007},
}

\bib{Sh2}{article}{
 author={Shoikhet, Boris},
 title={Koszul duality in deformation quantization and Tamarkin's approach to Kontsevich formality},
 journal={to appear in Adv.~Math.},
 eprint={arXiv:0805.0174},
 date={2008},
}

\bib{W}{article}{
  author={Willwacher, Thomas},
  title={A counterexample to the quantizability of modules},
  journal={Lett. Math. Phys.},
  volume={81},
  date={2007},
  number={3},
  pages={265--280},
  issn={0377-9017},
  review={\MR{2355492 (2008j:53160)}},
}

\end{biblist}
\end{bibdiv}

\end{document}